\newtheoremstyle{lemma}{\topsep}{\topsep}%
     {}
     {}
     {\bfseries}
     {}
     {0.1em}
     {\thmname{#1}\thmnumber{ #2}\thmnote{ #3}}
\theoremstyle{lemma}  
\newtheorem{theorem}{Theorem}     
\newtheorem{lemma}[theorem]{Lemma}
\newtheorem{definition}{Definition}
\numberwithin{equation}{section}
\title{Unpaired many-to-many disjoint path cover of balanced hypercubes\thanks{This research was partially supported by the National Natural Science Foundation of China (Nos. 11801061 and 11761056) and the Chunhui Project of Ministry of Education (No. Z2017047).}}
\author{ Huazhong L\"{u}$^{1}$\thanks{Corresponding author.} and Tingzeng Wu$^{2}$\\
{\small $^{1}$School of Mathematical Sciences,} \\
{\small University of Electronic Science and Technology of China,}\\
{\small Chengdu, Sichuan 610054, P.R. China}\\
{\small E-mail: lvhz08@lzu.edu.cn}\\
{\small $^{2}$School of Mathematics and Statistics, Qinghai Nationalities University, }\\
{\small Xining, Qinghai 810007, P.R. China} \\
{\small E-mail: mathtzwu@163.com}\\}
\date{}
\begin{document}

\maketitle
\begin{abstract}

The balanced hypercube $BH_n$, a variant of the hypercube, was proposed as a desired interconnection network topology. It is known that $BH_n$ is bipartite. Assume that $S=\{s_1,s_2,\cdots,s_{2n-2}\}$ and $T=\{t_1,t_2,\cdots,t_{2n-2}\}$ are any two sets of vertices in different partite sets of $BH_n$ ($n\geq2$). It has been proved that there exists paired 2-disjoint path cover of $BH_n$. In this paper, we prove that there exists unpaired $(2n-2)$-disjoint path cover of $BH_n$ ($n\geq2$) from $S$ to $T$, which improved some known results. The upper bound $2n-2$ of the number of disjoint paths in unpaired $(2n-2)$-disjoint path cover is best possible.

\vskip 0.1 in

\noindent \textbf{Key words:} Interconnection network; Balanced hypercube; Vertex-disjoint path cover; Unpaired
\end{abstract}

\section{Introduction}
The interconnection network (network for short) plays an important role in massively parallel and distributed systems \cite{Leighton}. Linear arrays and rings are two fundamental networks. Since some parallel applications such as those in image and signal processing are originally designated on an array architecture, it is important to have effective path embedding in a network \cite{Bhandarkar,Wani}. To find parallel paths among vertices in networks is one of the most central issues concerned with efficient data transmission \cite{Leighton}. Parallel paths in networks are usually studied with regard to disjoint paths in graphs. Moreover, algorithms designed on linear arrays or rings can be efficiently simulated in a topology containing paths or cycles, so path and cycle embedding properties of networks have been widely studied \cite{Chen,Dong,Dybizbanski,Fan,Gould,Jo,Kim,Tsai,Wang,Xu,Yang}.

In disjoint path cover problems, the many-to-many disjoint path cover problem is the most generalized one\cite{Park2}. Assume that $S=\{s_1, s_2,\cdots, s_k\}$ and $T=\{t_1, t_2,\cdots, t_k\}$ are two sets of $k$ sources and $k$ sinks in a graph $G$, respectively, the {\em many-to-many $k$-disjoint path cover} ($k$-DPC for short) problem is to determine whether there exist $k$ disjoint paths $P_1,P_2,\cdots,P_k$ in $G$ such that $P_i$ joins $s_i$ to $t_{\psi(i)}$ for each $i\in\{1,2,\cdots,k\}$ and $V(P_1)\cup\cdots \cup V(P_k)=V(G)$, where $\psi$ is a permutation on the set $\{1,2,\cdots,k\}$.
The $k$-DPC is called {\em paired} if $\psi$ is the identical permutation
and {\em unpaired} otherwise. Interestingly, the $k$-DPC problem is closely related to the well-known Hamiltonian path problem in graphs. In fact, a 1-DPC of a network is indeed a Hamiltonian path between any two vertices.

The performance of the famous hypercube network is not optimum in all aspects, accordingly, many variants of the hypercube have been proposed. The balanced hypercube, proposed by Wu and Huang \cite{Wu}, is such the one of the most popularity. The special property of the balanced hypercube, which other hypercube variants do not have, is that each processor has a backup processor that shares the same neighborhood. Thus tasks running on a faulty processor can be shifted to its backup one \cite{Wu}. With such novel properties above, different aspects of the balanced hypercube were studied extensively,
including path and cycle embedding issues \cite{Hao,Li,Lu4,Xu,Yang,Zhou}, connectivity \cite{Lu2,Yang3},
matching preclusion\cite{Lu}, and symmetric properties \cite{Zhou2,Zhou3}.

Recently, Cheng el al. \cite{Cheng} proved that the balanced hypercube $BH_n$ ($n\geq1$) has a paired 2-DPC, which is a generalization of Hamiltonian laceability of the balanced hypercube \cite{Xu}. To the best of our knowledge, there is no literature on $k$-DPC in the balanced hypercube when $k\geq3$. In this paper, we will consider the problem of unpaired $k$-DPC of the balanced hypercube.

The rest of this paper is organized as follows. In Section 2, some definitions and lemmas are presented. The main result of this paper is shown in Section 3. Conclusions are given in Section 4.

\section{Preliminaries and some lemmas}

A network is usually modeled by a simple undirected graph, where vertices represent processors and edges represent links between processors. Let $G=(V(G),E(G))$ be a graph, where $V(G)$ and $E(G)$ are its vertex-set and edge-set, respectively. The number of vertices of $G$ is denoted by $|V(G)|$. The set of vertices adjacent to a vertex $v$ is called the {\em neighborhood} of $v$, denoted by $N_G(v)$, the subscript will be omitted when the context is clear. A path $P$ in $G$ is a sequence of distinct vertices so that there is an edge joining each pair of consecutive vertices. If $P=v_0v_1\cdots v_{k-1}$ and $k\geq3$, then the graph $C=P+v_0v_{k-1}$ is called a {\em cycle}. A path (resp. cycle) containing all vertices of a graph $G$ is called a {\em Hamiltonian path} (resp. {\em cycle}). A graph admits a Hamiltonian cycle is a {\em Hamiltonian graph}. A Hamiltonian bipartite graph $G$ is {\em Hamiltonian laceable} if, for any two vertices $u$ and $v$ from different partite sets, there exists a Hamiltonian path between $u$ and $v$. For other standard graph notations not defined here please refer to \cite{Bondy}.
\vskip 0.05 in

The definitions of the balanced hypercube are given as follows.
\vskip 0.0 in

\begin{definition}{\bf .}\label{def1}\cite{Wu} An $n$-dimension balanced hypercube $BH_{n}$ contains
$4^{n}$ vertices $(a_{0},$ $\ldots,a_{i-1},$
$a_{i},a_{i+1},\ldots,a_{n-1})$, where $a_{i}\in\{0,1,2,3\},$ $0\leq
i\leq n-1$. Any vertex $v=(a_{0},\ldots,a_{i-1},$
$a_{i},a_{i+1},\ldots,a_{n-1})$ in $BH_{n}$ has the following $2n$ neighbors:

\begin{enumerate}
\item $((a_{0}+1)$ mod $
4,a_{1},\ldots,a_{i-1},a_{i},a_{i+1},\ldots,a_{n-1})$,\\
      $((a_{0}-1)$ mod $ 4,a_{1},\ldots,a_{i-1},a_{i},a_{i+1},\ldots,a_{n-1})$, and
\item $((a_{0}+1)$ mod $ 4,a_{1},\ldots,a_{i-1},(a_{i}+(-1)^{a_{0}})$ mod $
4,a_{i+1},\ldots,a_{n-1})$,\\
      $((a_{0}-1)$ mod $ 4,a_{1},\ldots,a_{i-1},(a_{i}+(-1)^{a_{0}})$ mod $
      4,a_{i+1},\ldots,a_{n-1})$.
\end{enumerate}
\end{definition}

The first coordinate $a_{0}$ of the vertex
$(a_{0},\ldots,a_{i},\ldots,a_{n-1})$ in $BH_{n}$ is defined as the {\em inner index}, and
$a_{i}$ $(1\leq i\leq n-1)$ are {\em $i$-dimensional indices}.

\vskip 0.0 in

The recursive definition of the balanced hypercube is presented as follows.

\begin{definition}{\bf .}\label{def2}\cite{Wu}
\begin{enumerate}
\item $BH_{1}$ is a $4$-cycle, whose vertices are labelled
by $0,1,2,3$ clockwise.
\item $BH_{k+1}$ is constructed from $4$ $BH_{k}$s, which
are labelled by $BH^{0}_{k}$, $BH^{1}_{k}$, $BH^{2}_{k}$,
$BH^{3}_{k}$. For any vertex in $BH_{k}^{i}(0\leq i\leq 3)$, its new labelling in $BH_{k+1}$ is $(a_{0},a_{1},\ldots,a_{k-1},i)$, and it has two new neighbors:
\begin{enumerate}
\item[a)] $BH^{i+1}_{k}:((a_{0}+1)$ mod $4,a_{1},\ldots,a_{k-1},(i+1)$ mod $4)$ and

$((a_{0}-1)$ mod $4,a_{1},\ldots,a_{k-1},(i+1)$ mod $4)$ if $a_{0}$ is even.

\item[b)] $BH^{i-1}_{k}:((a_{0}+1)$ mod $4,a_{1},\ldots,a_{k-1},(i-1)$ mod $4)$ and

$((a_{0}-1)$ mod $4,a_{1},\ldots,a_{k-1},(i-1)$ mod $4)$ if $a_{0}$ is odd.
\end{enumerate}

\end{enumerate}
\end{definition}

$BH_{1}$ is shown in Fig. \ref{g1} (a). The standard layout of $BH_{2}$ is shown in Fig. \ref{g1} (b) and the ring-like layout is shown in Fig. \ref{g1} (c).

The following basic properties of the balanced hypercube will be applied in the main result of this paper.

\begin{lemma}\label{bipartite}\cite{Wu}{\bf.}
$BH_{n}$ is bipartite.
\end{lemma}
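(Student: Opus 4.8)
The plan is to exhibit an explicit proper $2$-coloring of $V(BH_n)$, since a graph is bipartite precisely when such a coloring exists (equivalently, when it contains no odd cycle). First I would extract the key structural fact from Definition \ref{def1}: whichever of the $2n$ neighbor types is chosen, the inner index $a_0$ is always replaced by $(a_0+1)\bmod 4$ or $(a_0-1)\bmod 4$, while the higher-dimensional coordinates may or may not change. Thus moving along any edge of $BH_n$ alters the inner index by $\pm 1$ modulo $4$.

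Next I would define the bipartition $V(BH_n)=V_0\cup V_1$, where $V_0$ collects all vertices whose inner index lies in $\{0,2\}$ and $V_1$ collects all vertices whose inner index lies in $\{1,3\}$. To check that this is a valid bipartition it suffices to verify that $\pm 1\bmod 4$ interchanges the two residue classes $\{0,2\}$ and $\{1,3\}$: indeed $0\pm 1\equiv 1,3$ and $2\pm 1\equiv 1,3$ land in $\{1,3\}$, while $1\pm 1\equiv 0,2$ and $3\pm 1\equiv 0,2$ land in $\{0,2\}$. Consequently every edge of $BH_n$ joins a vertex of $V_0$ to a vertex of $V_1$, and no edge lies inside either part, which is exactly the assertion that $BH_n$ is bipartite.

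The argument is essentially a one-line parity observation, so there is no serious obstacle; the only place demanding a little care is the wrap-around at the ends of the cycle $\mathbb{Z}_4$ (the transitions $0\leftrightarrow 3$ and $2\leftrightarrow 3$), where one must confirm that reducing modulo $4$ does not break the parity alternation. As the case check above shows, it does not. An alternative route would use the recursive Definition \ref{def2} together with induction on $n$, taking $BH_1$ (a $4$-cycle) as the base case and arguing that the inter-copy edges added when assembling $BH_{k+1}$ from four copies of $BH_k$ respect a consistent bipartition; but this again reduces to tracking the parity of $a_0$, so I would prefer the direct coloring above for brevity.
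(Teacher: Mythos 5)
Your proof is correct: since Definition \ref{def1} forces every edge of $BH_n$ to change the inner index by $\pm 1 \pmod 4$, the parity of $a_0$ alternates along every edge, and your two-coloring by even/odd inner index is a valid bipartition. The paper itself imports this lemma from \cite{Wu} without proof, but the bipartition you construct is exactly the partition $V_0$ (even inner index) and $V_1$ (odd inner index) that the paper adopts immediately after the lemma, so your argument is precisely the intended one.
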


By Lemma \ref{bipartite}, let $V_0$ and $V_1$ be the bipartition of $BH_{n}$, where $V_0$ contains all vertices of $BH_n$ with even inner indices, and $V_1$ contains all vertices of $BH_n$ with odd inner indices.

\begin{lemma}\label{transitive}\cite{Wu,Zhou}{\bf.}
$BH_{n}$ is vertex-transitive and edge-transitive.
\end{lemma}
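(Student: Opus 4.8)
The plan is to prove both properties by exhibiting automorphisms of $BH_n$, using the adjacency rule read off from Definition \ref{def1}: two vertices $u=(a_0,\dots,a_{n-1})$ and $v=(b_0,\dots,b_{n-1})$ are adjacent exactly when $b_0=a_0\pm1\pmod 4$ and either $b_i=a_i$ for all $i\ge1$, or there is a single index $j\ge1$ with $b_j=a_j+(-1)^{a_0}\pmod4$ and $b_i=a_i$ for the remaining $i\ge1$. I will call these the \emph{inner} edges and the \emph{dimensional} edges (in direction $j$). Every map below is checked to be an automorphism simply by verifying that it sends inner edges and dimensional edges to edges of the same kind, the only delicate point being the sign $(-1)^{a_0}$, which flips whenever the parity of the inner index changes.

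For vertex-transitivity I would use three families: (i) for each $i\ge1$, the dimensional shift $\alpha_i\colon(a_0,\dots,a_i,\dots)\mapsto(a_0,\dots,a_i+1,\dots)$; (ii) the inner double-shift $\beta\colon(a_0,a_1,\dots)\mapsto(a_0+2,a_1,\dots)$; and (iii) the reflection $\rho\colon(a_0,a_1,\dots,a_{n-1})\mapsto(1-a_0,-a_1,\dots,-a_{n-1})$, which exchanges the two partite sets. The naive single inner shift $a_0\mapsto a_0+1$ is \emph{not} an automorphism, since it flips the parity of $a_0$ and so breaks the dimensional edges; this is why $\beta$ shifts by $2$, and why $\rho$ must negate the remaining coordinates. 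Given a target $(b_0,\dots,b_{n-1})$, I would first apply $\rho$ if $b_0$ is odd, then $\beta$ to correct the inner index within its parity class, and finally $\alpha_i^{\,b_i}$ for each $i\ge1$; this carries the origin to the target, proving transitivity.

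For edge-transitivity I would reduce to arc-transitivity: since $BH_n$ is vertex-transitive, it suffices to show that the stabilizer of the origin $\mathbf 0$ acts transitively on the $2n$ edges incident to $\mathbf 0$. Three ingredients are needed. The dimension permutations $(a_0,a_1,\dots,a_{n-1})\mapsto(a_0,a_{\pi(1)},\dots,a_{\pi(n-1)})$ with $\pi\in S_{n-1}$ fix $\mathbf 0$ and render all dimensional directions equivalent. The inner inversion $\tilde\tau\colon(a_0,a_1,\dots)\mapsto(-a_0,a_1,\dots)$ fixes $\mathbf 0$ and interchanges the two inner edges $(1,\mathbf 0)$ and $(3,\mathbf 0)$. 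It then remains to produce one \emph{mixing} automorphism $\mu$, fixing $\mathbf 0$, that carries an inner edge onto a dimensional edge; conjugating $\tilde\tau$ by $\mu$ and combining with the dimension permutations merges all $2n$ edges into a single orbit, giving arc-transitivity and hence edge-transitivity.

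Constructing $\mu$ is the main obstacle. For $n=2$ it is the explicit involution given by $\mu(a_0,a_1)=(a_0,-a_1)$ when $a_0$ is even and $\mu(a_0,a_1)=(a_0,1-a_1)$ when $a_0$ is odd, which one verifies interchanges inner and dimensional edges. For $n\ge3$ this map cannot be tensored with the identity on $a_2,\dots,a_{n-1}$: the compatibility constraints imposed by the dimensional edges in directions $j\ge2$ force the dimension-$1$ output coordinate of any origin-fixing automorphism that fixes $a_2,\dots,a_{n-1}$ to be independent of $a_0$, and such a map necessarily leaves inner edges inner, so it cannot mix. Hence the required $\mu$ must act globally on all coordinates. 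I would obtain it either by induction on $n$ through the recursive decomposition of $BH_{n+1}$ into four copies of $BH_n$ in Definition \ref{def2}, transporting the mixing symmetry of the new dimension down to dimension $0$, or by realizing $BH_n$ as a Cayley graph of $\mathbb Z_4^{\,n-1}\rtimes\mathbb Z_4$ and exhibiting the extra (non-normal) graph automorphism that permutes the generator inducing the twist with the remaining ones. This global mixing step is where the difficulty of edge-transitivity resides.
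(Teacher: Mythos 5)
The paper itself never proves this lemma; it is imported from the literature (vertex-transitivity from Wu--Huang, edge-transitivity from Zhou et al.), so your argument has to stand entirely on its own. The vertex-transitivity half does stand: $\alpha_i$, $\beta$ and $\rho$ are genuine automorphisms (for $\rho$ the needed identity is $(-1)^{1-a_0}=-(-1)^{a_0}$, which exactly matches the negation of the outer coordinates), and composing them carries $\mathbf{0}$ to an arbitrary vertex. Your reduction of edge-transitivity to transitivity of the stabilizer of $\mathbf{0}$ on its $2n$ incident edges is also sound, the dimension permutations and the inner inversion $\tilde\tau$ are correct, and your $n=2$ involution $\mu$ does swap inner and dimensional edges of $BH_2$.

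The genuine gap is that for $n\ge 3$ the mixing automorphism---which, as you say yourself, is where the whole difficulty of edge-transitivity resides---is never constructed: you name two strategies (induction through Definition \ref{def2}, or an ``extra'' automorphism of a Cayley-graph presentation) without carrying either out, and neither is routine. Concretely, write $BH_n\cong\mathrm{Cay}(G_n,S)$ with $G_n=\mathbb{Z}_4^{n-1}\rtimes\mathbb{Z}_4$, multiplication $(a_0,a)(b_0,b)=(a_0+b_0,\,a+(-1)^{a_0}b)$, and $S=\{(\pm1,\mathbf{0})\}\cup\{(\pm1,e_j):1\le j\le n-1\}$. The obvious candidate---the group automorphism fixing each $(0,e_j)$ and sending $c=(1,\mathbf{0})$ to $(1,e_1)$, which is well defined since both elements have order $4$ and induce the same (negation) action on $\mathbb{Z}_4^{n-1}$---maps $(1,e_j)=(0,e_j)\cdot c$ to $(1,e_j+e_1)\notin S$, so it is \emph{not} a graph automorphism; to repair it one must compose with the non-obvious twist $\psi$ of $\mathbb{Z}_4^{n-1}$ given by $\psi(e_1)=-e_1$ and $\psi(e_j)=e_j-e_1$ for $j\ge2$ (invertible over $\mathbb{Z}_4$, determinant $-1$), after which the resulting automorphism fixes the identity, preserves $S$, and exchanges $(\pm1,\mathbf{0})$ with $(\pm1,e_1)$. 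Until a construction of this kind (or your inductive alternative) is actually supplied, your proof of edge-transitivity is incomplete for every $n\ge3$---and that is exactly the part of the lemma that goes beyond Wu--Huang and constitutes the cited theorem of Zhou et al.
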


\begin{lemma}\label{neighbor}\cite{Wu}{\bf.}
Vertices $u=(a_{0},a_{1},\ldots,a_{n-1})$ and
$v=((a_{0}+2)$ mod 4, $a_{1},\ldots,a_{n-1})$ in $BH_{n}$ have the same neighborhood.
\end{lemma}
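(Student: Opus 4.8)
The plan is to verify the claim directly from Definition~\ref{def1}, exhibiting a one-to-one correspondence between the $2n$ neighbours of $v=((a_0+2)\bmod 4,a_1,\ldots,a_{n-1})$ and those of $u=(a_0,a_1,\ldots,a_{n-1})$. Write $b_0=(a_0+2)\bmod 4$ for the inner index of $v$. The whole argument rests on two elementary modular facts, which I would establish first.

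The first fact is a parity observation: since $b_0$ and $a_0$ differ by $2$, they have the same parity, so $(-1)^{b_0}=(-1)^{a_0}$. This guarantees that the dimension-altering neighbours of $v$ described in Definition~\ref{def1}(2) use exactly the same shift $(-1)^{a_0}$ in each $i$-dimensional index as those of $u$. The second fact is a computation of the inner indices: $(b_0+1)\bmod 4=(a_0+3)\bmod 4=(a_0-1)\bmod 4$ and $(b_0-1)\bmod 4=(a_0+1)\bmod 4$. Thus incrementing and decrementing the inner index of $v$ produces precisely the two inner indices $(a_0+1)\bmod 4$ and $(a_0-1)\bmod 4$ that appear among the neighbours of $u$, merely with the roles of the ``$+1$'' and ``$-1$'' branches interchanged.

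With these two facts in hand, I would simply list the neighbours. For the type-(1) neighbours, $v$ is joined to $((a_0-1)\bmod 4,a_1,\ldots,a_{n-1})$ and $((a_0+1)\bmod 4,a_1,\ldots,a_{n-1})$, which is exactly the (unordered) pair of type-(1) neighbours of $u$. For each $i$ with $1\le i\le n-1$, the type-(2) neighbours of $v$ are $((a_0-1)\bmod 4,\ldots,(a_i+(-1)^{a_0})\bmod 4,\ldots)$ and $((a_0+1)\bmod 4,\ldots,(a_i+(-1)^{a_0})\bmod 4,\ldots)$, using the parity fact for the sign; these coincide with the type-(2) neighbours of $u$ for the same $i$. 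Since the two neighbour lists agree entrywise, $N_{BH_n}(u)=N_{BH_n}(v)$.

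I do not anticipate a genuine obstacle here: the statement is a direct consequence of the adjacency rules, and the only point requiring care is the sign $(-1)^{a_0}$ in Definition~\ref{def1}(2), where one must invoke the parity fact $(-1)^{b_0}=(-1)^{a_0}$ rather than treating the two vertices as unrelated. Everything else is a routine verification modulo $4$.
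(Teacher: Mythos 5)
Your proof is correct. Note, however, that the paper does not prove this lemma at all: it is stated as a known result imported from the reference [Wu] (Wu and Huang's original paper on the balanced hypercube), so there is no internal proof to compare against. Your direct verification from Definition~\ref{def1} is exactly the standard argument that the citation suppresses: the two computations you isolate --- that $(b_0\pm 1)\bmod 4$ with $b_0=(a_0+2)\bmod 4$ reproduces the pair $\{(a_0+1)\bmod 4,\,(a_0-1)\bmod 4\}$ with the $+1$/$-1$ branches swapped, and that $(-1)^{b_0}=(-1)^{a_0}$ because adding $2$ preserves parity --- are precisely the two points on which the claim rests, and your entrywise matching of the $2n$ neighbours of $u$ with the $2n$ neighbours of $v$ (the type-(1) pair and, for each $i$, the type-(2) pair) is complete and leaves no gap.
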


\begin{figure}
\centering
\includegraphics[width=150mm]{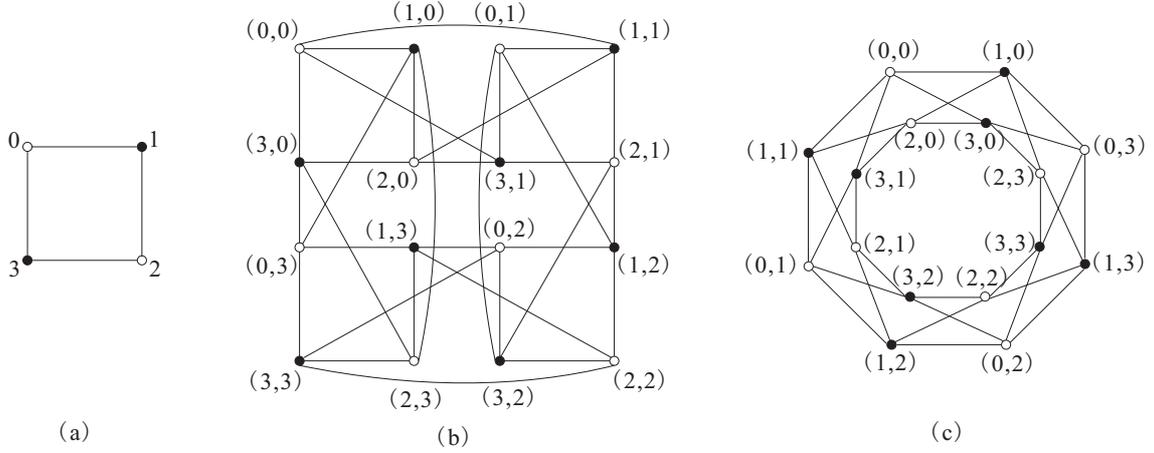}
\caption{$BH_{1}$ and $BH_{2}$.} \label{g1}
\end{figure}

Assume that $u$ is a neighbor of $v$ in $BH_n$. If $u$ and $v$ differ only from the inner index, then $uv$ is called a $0$-{\em dimension edge}, and $u$ and $v$ are mutually called 0-dimension neighbors. Similarly, if $u$ and $v$ differ from the $j$-th outer index ($1\leq j\leq n-1$), $uv$ is called a $j$-{\em dimension edge}, and $u$ and $v$ are mutually called $j$-dimension neighbors. The set of all $k$-dimension edges of $BH_n$ is denoted by $E_k$ for each $k\in\{0,\cdots,n-1\}$, and the subgraph of $BH_{n}$ obtained by deleting $E_{n-1}$ is written by $BH^{i}_{n-1}$, where $0\leq i \leq 3$. Obviously, each of $BH^{i}_{n-1}$ is isomorphic to $BH_{n-1}$.

\begin{lemma}\label{lace}
{\rm \cite{Li}}{\bf.} The balanced hypercube $BH_{n}$ is Hamiltonian laceable for all $n\geq 1$.
\end{lemma}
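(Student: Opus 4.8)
The plan is to prove the statement by induction on $n$, exploiting the recursive decomposition of $BH_n$ into the four copies $BH_{n-1}^0,BH_{n-1}^1,BH_{n-1}^2,BH_{n-1}^3$ described in Definition \ref{def2}. The base case $n=1$ is immediate: $BH_1$ is a $4$-cycle with bipartition $V_0=\{0,2\}$, $V_1=\{1,3\}$, and one checks directly that any two vertices from different partite sets are joined by a Hamiltonian path (for instance $0\,3\,2\,1$ joins $0$ and $1$, while $0\,1\,2\,3$ joins $0$ and $3$).

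For the inductive step, assume $BH_{n-1}$ is Hamiltonian laceable; since each $BH_{n-1}^i$ is isomorphic to $BH_{n-1}$, every subcube is itself Hamiltonian laceable. Fix $u\in V_0$ and $v\in V_1$, normalizing their positions by Lemma \ref{transitive}. The key structural fact I would use is that each vertex of $BH_n$ sends exactly two of its $2n$ edges outside its own subcube, and that between consecutive subcubes $BH_{n-1}^i$ and $BH_{n-1}^{i+1}$ there are $4^{n-1}$ crossing edges; indeed, by Definition \ref{def2} every even-inner-index vertex of $BH_{n-1}^i$ has both its outgoing edges landing on odd-inner-index vertices of $BH_{n-1}^{i+1}$. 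This abundance of crossing edges, each of which necessarily joins a $V_0$-vertex to a $V_1$-vertex, is what gives the freedom to choose the ``gluing'' vertices.

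I would then split into cases according to how $u$ and $v$ are distributed among the four subcubes: (i) $u,v$ in the same subcube; (ii) $u,v$ in adjacent subcubes; and (iii) $u,v$ in the two subcubes at cyclic distance $2$ (namely $BH_{n-1}^0,BH_{n-1}^2$ or $BH_{n-1}^1,BH_{n-1}^3$). Up to the dihedral symmetry of the $4$-cycle of subcubes these exhaust all possibilities. In each case the strategy is the same: traverse the four subcubes in cyclic order, enter and leave each subcube through a crossing edge, and inside each subcube use a Hamiltonian path supplied by the induction hypothesis between the chosen entry and exit vertices. Two constraints must be met simultaneously: the two endpoints of each internal path must lie in opposite partite sets so that the induction hypothesis applies, and consecutive gluing vertices must be genuinely adjacent across the subcube boundary. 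Both are parity conditions, and they are mutually compatible precisely because a crossing edge always joins an even-inner-index vertex to an odd-inner-index one.

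The main obstacle will be the bookkeeping in selecting the gluing vertices so that all four internal paths have endpoints in the correct partite sets at once, while keeping these vertices pairwise distinct and distinct from $u$ and $v$. When $u$ and $v$ lie in the same subcube, or in the two subcubes at cyclic distance $2$, one must ensure that the global path actually begins at $u$ and ends at $v$ rather than at intermediate gluing vertices; I would handle this by ``opening up'' the internal path at $u$ (respectively $v$) and rerouting it to a crossing edge, which may require a short auxiliary argument or an appeal to a slightly stronger laceability-type statement (existence of a Hamiltonian path with one prescribed endpoint and one prescribed crossing edge at the other end). The large number of crossing edges between adjacent subcubes guarantees that a valid choice of distinct gluing vertices always exists, so every case closes and the induction is complete.
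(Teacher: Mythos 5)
First, a remark on the comparison itself: the paper does not prove this lemma at all --- it is imported as a citation (attributed to \cite{Li}; the laceability of $BH_n$ goes back to \cite{Xu}), so your attempt can only be judged on its own merits, not against an in-paper argument.

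Your base case, your count of crossing edges, and your observation that crossing edges always join $V_0$ to $V_1$ are all correct, and two of your cases do go through with laceability alone: same-subcube (via the edge-deletion detour you hint at) and one of the two adjacent configurations. But there is a genuine structural gap that your ``traverse the four subcubes in cyclic order'' scheme cannot survive, and it is not mere bookkeeping. The crossing edges are \emph{oriented} by parity: edges between $BH_{n-1}^i$ and $BH_{n-1}^{i+1}$ join $V_0$-vertices of $BH_{n-1}^i$ to $V_1$-vertices of $BH_{n-1}^{i+1}$. A segment of the global path that covers a whole subcube contiguously is a Hamiltonian path of a balanced bipartite graph, so it has one endpoint in each partite set; its $V_0$-endpoint can only exit toward $BH_{n-1}^{i+1}$ and its $V_1$-endpoint only toward $BH_{n-1}^{i-1}$. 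Consequently, if every subcube is covered by a single contiguous segment, the winding direction around the ring is forced, and one checks that $u\in V_0\cap V(BH_{n-1}^i)$ forces the terminal vertex $v$ to lie in $V_1\cap V(BH_{n-1}^{i+1})$. So contiguous traversal handles \emph{only} that one configuration. When $u$ and $v$ lie in subcubes at cyclic distance $2$ (your case (iii)), or in the adjacent pair with the ``wrong'' orientation, no choice of gluing vertices works: on $C_4$ there is no Hamiltonian path between opposite vertices, and each of the two bridging subcubes can only be crossed in one parity-determined direction, so at least one subcube must be covered by two vertex-disjoint segments.

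Once a subcube must be covered by two disjoint segments with (partially) prescribed endpoints, the induction hypothesis you are carrying --- plain Hamiltonian laceability of $BH_{n-1}$ --- is no longer strong enough: what is needed is a two-disjoint-path cover of $BH_{n-1}$ with prescribed endpoints, i.e.\ precisely the kind of statement recorded in the paper as Lemma \ref{2paths} (the paired 2-DPC result of Cheng et al.\ \cite{Cheng}), and in one ordering of the construction even a 2-DPC whose two paths have endpoint pairs $(V_0,V_0)$ and $(V_1,V_1)$, which Lemma \ref{2paths} does not cover as stated. Your proposed patch --- ``a Hamiltonian path with one prescribed endpoint and one prescribed crossing edge at the other end'' --- does not address this, because the obstruction is not about where the path starts or ends but about the impossibility of the subcube-visit sequence. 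To repair the proof you should either strengthen the induction hypothesis so that laceability and a suitable 2-DPC property are proved simultaneously, or invoke the known 2-DPC result for $BH_n$ as an external ingredient; as written, the induction does not close.
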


%


\begin{lemma}{\rm \cite{Cheng}}\label{2paths}{\bf .}  Let $u,x\in V_{0}$ and $v,y\in V_{1}$. Then there exist two vertex-disjoint paths $P$ and $Q$ such that: (1) $P$ connects $u$ to $v$, (2) $Q$ connects $x$ to $y$, (3) $V(P)\cup V(Q)=V(BH_{n})$.
\end{lemma}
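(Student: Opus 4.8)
The plan is to prove the statement by induction on $n$, exploiting the recursive decomposition of $BH_n$ into four copies $BH^0_{n-1}, BH^1_{n-1}, BH^2_{n-1}, BH^3_{n-1}$ of $BH_{n-1}$ joined in a ring by the dimension-$(n-1)$ edges. For the base case $n=1$ the graph is a $4$-cycle and the two required paths are produced by inspection (and, if needed, $n=2$ is checked directly from the layout in Fig.~\ref{g1}). For the inductive step I would first record the crucial routing fact read off from Definition \ref{def2}: since each vertex with even inner index in $BH^i_{n-1}$ joins $BH^{i+1}_{n-1}$ while each vertex with odd inner index joins $BH^{i-1}_{n-1}$, every dimension-$(n-1)$ edge connects $V_0$ to $V_1$ and the four subcubes form a parity-controlled ring. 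Using vertex- and edge-transitivity (Lemma \ref{transitive}) I would normalize the position of one terminal, say $u \in BH^0_{n-1}$, to cut down the number of configurations.

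The argument then splits according to how the four terminals $u,v,x,y$ are distributed among the subcubes, the essentially different patterns being $(4,0,0,0)$, $(3,1,0,0)$, $(2,2,0,0)$, $(2,1,1,0)$ and $(1,1,1,1)$. The two building blocks are the inductive hypothesis (a paired $2$-DPC inside a single subcube) and Hamiltonian laceability of $BH_{n-1}$ (Lemma \ref{lace}), which supplies a Hamiltonian path between any two vertices of a subcube lying in different partite sets. A subcube containing no terminal is swept by a single Hamiltonian path that is \emph{passed through} the ring: it is entered from one neighboring subcube and left toward the other, which is exactly what the parity of the crossing edges permits. A subcube carrying one terminal contributes a Hamiltonian path starting at that terminal and ending at a crossing vertex, while a subcube carrying two terminals is handled either by a single laceable path or by the inductive $2$-DPC. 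The segments produced in the subcubes are then spliced along the ring through carefully chosen dimension-$(n-1)$ edges so that they merge into exactly two paths with the prescribed ends $u$--$v$ and $x$--$y$.

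The main obstacle is the bipartite parity bookkeeping at the crossing edges, and it is sharpest in the concentrated case $(4,0,0,0)$, where all of $u,v,x,y$ lie in $BH^0_{n-1}$ and the other three subcubes carry no terminal. Here one cannot cover, say, $BH^1_{n-1}$ by an excursion that both leaves from and returns to $BH^0_{n-1}$: by the parity rule such an excursion would have to enter and exit $BH^1_{n-1}$ at two vertices of the \emph{same} partite set, and no Hamiltonian path of the bipartite graph $BH_{n-1}$ joins two vertices of one partite set. I would resolve this by \emph{absorption along the ring}: first apply the inductive hypothesis to $BH^0_{n-1}$ to get a $2$-DPC $\{P_0,Q_0\}$ with $P_0$ from $u$ to $v$ and $Q_0$ from $x$ to $y$; then choose an edge $cd$ of $P_0$ with $c\in V_0$ and $d\in V_1$, delete it, and reroute $P_0$ out of $c$ into $BH^1_{n-1}$, across Hamiltonian paths of $BH^1_{n-1}$, $BH^2_{n-1}$, $BH^3_{n-1}$ chosen by laceability, and finally back from $BH^3_{n-1}$ into $d$. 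The four crossing edges so used all respect the $V_0$--$V_1$ parity, so the three laceable paths glue into a single detour and $\{P_0,Q_0\}$ becomes a $2$-DPC of $BH_n$. The remaining distributions are easier variants of this splicing scheme, the only care being to line up the endpoints of the subcube paths with actual dimension-$(n-1)$ edges, for which the abundance of crossing edges together with Lemma \ref{neighbor} leaves enough freedom.
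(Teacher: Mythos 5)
First, a point of order: the paper does not prove this statement at all. Lemma~\ref{2paths} is imported verbatim from Cheng, Hao and Feng \cite{Cheng}, where it is the main theorem of a full-length article. So there is no in-paper proof to compare against; your attempt has to be judged on its own merits (and against the cited source, whose proof is likewise an induction over the four-copy decomposition, but occupies many pages of case analysis).

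On the merits, your outline gets the architecture and the one genuinely delicate point right. The parity rule you extract from Definition~\ref{def2} is correct: every $(n-1)$-dimension edge joins $V_0\cap BH_{n-1}^i$ to $V_1\cap BH_{n-1}^{i+1}$, so a path can leave $BH_{n-1}^i$ ``forward'' only from a $V_0$-vertex and ``backward'' only from a $V_1$-vertex. Hence an excursion that leaves $BH_{n-1}^0$, covers $BH_{n-1}^1$, and returns would need a Hamiltonian path of $BH_{n-1}^1$ between two $V_1$-vertices, which bipartiteness forbids; and your ring-absorption detour (replace an edge $cd$ of $P_0$ with $c\in V_0$, $d\in V_1$ by a chain of laceable Hamiltonian paths through $BH_{n-1}^1$, $BH_{n-1}^2$, $BH_{n-1}^3$) is parity-consistent at all four crossings, so the concentrated case $(4,0,0,0)$ is correctly and completely disposed of.

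The gap is the closing claim that the remaining distributions are ``easier variants.'' They are where the bulk of the work lies, and at least one of them breaks the assembly as you describe it. Take $\{u,v\}\subset BH_{n-1}^0$ and $\{x,y\}\subset BH_{n-1}^1$. If the two-terminal subcube $BH_{n-1}^1$ is handled ``by a single laceable path'' from $x$ to $y$, then $BH_{n-1}^2$ can be reached only through $BH_{n-1}^3$; but every entry into and exit out of $BH_{n-1}^2$ on that side uses a $V_0$-vertex of $BH_{n-1}^2$, so each covering segment there has one more $V_0$- than $V_1$-vertex, and no union of such segments can exactly cover the balanced subcube $BH_{n-1}^2$. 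The correct assembly must instead thread the $u$--$v$ path through $BH_{n-1}^1$ as well, i.e.\ invoke the inductive 2-DPC of $BH_{n-1}^1$ with one prescribed pair $(x,y)$ and one pair whose endpoints are free crossing vertices. Moreover the sub-cases depend not only on the count vector but on the partite sets of the terminals: with $\{u,x\}\subset BH_{n-1}^0$ and $\{v,y\}\subset BH_{n-1}^1$, the $V_0/V_1$ balance in each subcube forces the three cuts away from the $BH^0$--$BH^1$ cut to be crossed exactly twice more than that cut, so in the minimal assembly both paths must wind the long way around through $BH_{n-1}^3$ and $BH_{n-1}^2$. All of this is achievable with exactly your building blocks (2-DPC with partially free endpoints, laceability, parity-respecting splicing), but your proposal asserts this part rather than proves it, and it is not routine.
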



\section{Main results}

Because of the recursive structure of the balanced hypercube, we use induction to prove the main result. We start with the following useful notation.

Let $S=\{s_1,s_2,\cdots,s_{2n-2}\}$ and $T=\{t_1,t_2,\cdots,t_{2n-2}\}$ such that $S\subset V_0$ and $T\subset V_1$, $n\geq2$. For convenience, let $s_j^d$ denote the $d$-dimensional index of the vertex $s_j$, where $1\leq j\leq 2n-2$ and $d\in\{1,2,\cdots,n-1\}$.

\begin{lemma}\label{split}{\bf.} There exists a dimension $d\in\{0,1,\cdots,n-1\}$ such that by splitting $BH_n$ ($n\geq3$) along dimension $d$, $|V(BH_{n-1}^i)\cap S|\leq2n-4$ for each $0\leq i\leq n-1$.
\end{lemma}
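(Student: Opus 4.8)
The plan is to reduce the statement to a purely combinatorial question about how the $2n-2$ sources distribute among the four subcubes produced by each split, and then to rule out—by counting—the possibility that every split is concentrated. The first step is to record, for each dimension $d$, into which subcube a given source falls. For an outer dimension $d\in\{1,\dots,n-1\}$, deleting $E_d$ fixes the single coordinate $s_j^{d}$, so $s_j$ lands in the subcube indexed by $s_j^{d}$. The dimension $d=0$ is the delicate one: deleting $E_0$ does not keep the outer coordinates fixed, and a short invariant computation—the quantity $\sum_{d=1}^{n-1}a_d-(a_0\bmod 2)$ is constant on each component of $BH_n-E_0$—shows that a source $s_j$, which lies in $V_0$ and hence has even inner index, falls into the subcube indexed by $\sum_{d=1}^{n-1}s_j^{d}\pmod 4$. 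Pinning down this additive rule for the inner dimension is the structural prerequisite for everything that follows.

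With the sorting rules in hand I would argue by contradiction: suppose that for every $d\in\{0,1,\dots,n-1\}$ some subcube contains at least $2n-3$ sources. For each such $d$ this means all but at most one source share the majority value $M_d$ in dimension $d$; call the unique off-majority source, if any, the $d$-exceptional source. Two sources with the same outer address $(a_1,\dots,a_{n-1})$ differ only in their even inner index, so at most two sources share an outer address; consequently at most two sources can lie in the majority class of every outer dimension at once (they would all share the address $(M_1,\dots,M_{n-1})$), and each such source automatically sits in the dimension-$0$ majority $M_0=\sum_{d\ge1}M_d$ as well. Hence at least $2n-4$ sources are $d$-exceptional for some outer dimension $d$; since there are only $n-1$ outer dimensions and each supplies at most one exceptional source, $2n-4\le n-1$, i.e. $n\le 3$. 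This disposes of every $n\ge 4$ at once.

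The main obstacle is the remaining case $n=3$, where this crude pigeonhole over outer dimensions is exactly tight and must be sharpened using the additive behaviour of the inner split. Here $|S|=4$; the all-outer-majority group has exactly two sources of address $(M_1,M_2)$, and the two leftover sources must be the unique dimension-$1$ and dimension-$2$ exceptional sources, say $e$ with $e^{1}\neq M_1,\ e^{2}=M_2$ and $f$ with $f^{1}=M_1,\ f^{2}\neq M_2$. I would then compute the dimension-$0$ classes $\sum_{d}s^{d}\bmod 4$: the two address-$(M_1,M_2)$ sources land in class $M_1+M_2$, while $e$ lands in $e^{1}+M_2\neq M_1+M_2$ and $f$ in $M_1+f^{2}\neq M_1+M_2$. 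No dimension-$0$ class therefore receives three sources, so the dimension-$0$ split is \emph{not} concentrated, contradicting the assumption and finishing the proof. The entire difficulty is thus concentrated in exploiting that the inner-dimension split mixes the outer coordinates additively; this is precisely the feature that rescues the small case which a naive dimension-by-dimension pigeonhole cannot handle.
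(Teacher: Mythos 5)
Your proof is correct and takes essentially the same route as the paper's: assume every split is concentrated, use the pigeonhole fact that each outer dimension admits at most one off-majority source together with the fact that at most two sources (having even inner indices) can share an outer address to dispose of $n\ge 4$, and then settle $n=3$ by showing the dimension-$0$ split separates the two address-sharing sources from the two exceptional ones. The only real difference is presentational: you justify the dimension-$0$ sorting rule explicitly via the invariant $\sum_{d\ge 1}a_d-(a_0\bmod 2)$, which the paper uses only implicitly (your side remark that the all-majority sources lie in the dimension-$0$ majority class $M_0=\sum_{d\ge1}M_d$ is unjustified, but it is never used, so it is harmless).
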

\begin{proof} We first consider $BH_3$. Suppose on the contrary that for each $d\in\{1,2\}$, by splitting $BH_3$ along dimension $d$, there exists some $i$ ($0\leq i\leq 3$) such that $|V(BH_{2}^i)\cap S|\geq3$. Note that $S=\{s_1,s_2,s_3,s_4\}$, then $s_j^d$ ($j=1,2,3,4$) takes at most two values for some $d=1$ or $2$. If each of $s_j^d$, say $s_j^2$, takes exact one value, combining with $S\subset V_0$, then two vertices in $S$ have the same coordinates, which is a contradiction. So we assume that there are exactly two values of $s_j^d$ for each $d=1,2$. Then three of $s_j^d$ take one common value for $j=1,2,3,4$. Thus, two of $s_1,s_2,s_3$ and $s_4$, say $s_1$ and $s_2$, have the same 1-dimensional and 2-dimensional indices, and distinct inner indices. Observe that exactly one of the 1-dimensional and 2-dimensional indices of $s_3$ (resp. $s_4$) is different from that of $s_1$ and $s_2$. Thus, by splitting $BH_3$ along dimension $0$, $s_1$ and $s_2$ are in the same $BH_{2}^i$ for some $0\leq i\leq 3$, and $s_3,s_4\not\in V(BH_{2}^i)$.

Now we consider $BH_n$ for $n\geq4$. Clearly, $S=\{s_1,s_2,\cdots,s_{2n-2}\}$. Suppose on the contrary that $s_j^d$ ($j=1,2,\cdots,2n-2$) take at most two values and $2n-3$ of which take one common value for each $d=1,2,\cdots,n-1$. We can consider the coordinates (except inner index) of vertices in $S$ as row vectors, forming a $(2n-2)\times (n-1)$ matrix $M$. Thus, there exists at least three equal rows of $M$, indicating that there are three vertices in $S$ differing only the inner indices. Note that the inner indices of vertices in $S$ take only two values. So there are two vertices in $S$ with the same coordinates, which is a contradiction. This completes the proof.
\end{proof}

By the above lemma, there exists a dimension $d\in\{0,1,2,\cdots,n-1\}$, such that by splitting $BH_n$ along dimension $n-1$, each $BH_{n-1}^i$ contains at most $2n-4$ vertices in $S$, $0\leq i\leq 3$. We may assume that $d=n-1$ in the remaining paper. Let $S_i=V(BH_{n-1}^i)\cap S$ and $T_i=V(BH_{n-1}^i)\cap T$ and let $D_i=|T_i|-|S_i|$. We have the following lemma.

\begin{lemma}{\bf .}\label{D_i} There exists some $i\in\{0,1,2,3\}$ such that $|S_{i}|\geq|T_{i}|$ and $|S_{i+1}|\leq|T_{i+1}|$. Furthermore, $D_{i+1}+D_{i+2}\geq0$.
\end{lemma}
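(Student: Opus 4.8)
The plan is to strip the statement down to a single elementary fact about cyclic integer sequences that sum to zero. First I would record the identity that makes everything work: because $S$ and $T$ are partitioned among the four subcubes $BH_{n-1}^0,\dots,BH_{n-1}^3$ and $|S|=|T|=2n-2$,
\[
\sum_{i=0}^{3} D_i \;=\; \sum_{i=0}^{3}\bigl(|T_i|-|S_i|\bigr) \;=\; |T|-|S| \;=\; 0 .
\]
Hence $(D_0,D_1,D_2,D_3)$ is a cyclic sequence of integers (indices taken modulo $4$) whose total is zero. Every assertion in the lemma is now about this sequence: $|S_i|\ge|T_i|$ is $D_i\le 0$, $|S_{i+1}|\le|T_{i+1}|$ is $D_{i+1}\ge 0$, and the ``furthermore'' clause is exactly $D_{i+1}+D_{i+2}\ge 0$.

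The key step is to select $i$ by a minimum-prefix-sum (``gas-station'') argument rather than to establish the transition $D_i\le 0\le D_{i+1}$ and the pair-sum bound separately. Writing $P_0=0$ and $P_k=D_0+\cdots+D_{k-1}$, so that $P_4=0$, I would pick an index $r\in\{0,1,2,3\}$ at which $P_r$ is smallest and set $i=r-1\pmod 4$. The standard consequence is that every sum of consecutive terms read forward from position $r=i+1$ is nonnegative, since each such sum equals a difference $P_{r+k}-P_r$ of prefix sums (using $P_4=0$ to absorb the wraparound) and $P_r$ is the minimum. Reading off the prefixes of length $1$, $2$, and $3$ then yields, in order, $D_{i+1}\ge 0$, $D_{i+1}+D_{i+2}\ge 0$, and $D_{i+1}+D_{i+2}+D_{i+3}\ge 0$; the last inequality together with $\sum_i D_i=0$ forces $D_i\le 0$. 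These are precisely $|S_{i+1}|\le|T_{i+1}|$, the ``furthermore'' bound, and $|S_i|\ge|T_i|$, all realized by one and the same $i$.

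I do not anticipate a genuine obstacle here; the whole content is the recognition that a single carefully chosen $i$ settles all three inequalities at once, which is exactly what the minimum-prefix-sum choice guarantees. The only point needing a moment's care is the cyclic wraparound in the consecutive sums, which is handled by the identity $P_4=0$ as above. Because the sequence has only four terms, an alternative finish would be a short case analysis on the signs of the $D_i$ (using that the zero sum forbids all four from being strictly positive or all strictly negative, so some non-positive term is followed cyclically by a non-negative one); but the prefix-sum formulation avoids case-splitting and makes transparent why the same $i$ works for every inequality.
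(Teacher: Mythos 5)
Your proof is correct, and it takes a genuinely different route from the paper. You reduce the lemma to a pure statement about a cyclic integer sequence $(D_0,D_1,D_2,D_3)$ with $\sum_i D_i=0$ and then apply the minimum-prefix-sum (cycle/gas-station) argument: choosing $r$ to minimize $P_r$ and setting $i=r-1 \pmod 4$ makes every forward consecutive sum starting at $i+1$ nonnegative, which simultaneously yields $D_{i+1}\ge 0$, $D_{i+1}+D_{i+2}\ge 0$, and (via the zero total) $D_i\le 0$ --- all three conclusions from one choice of index, with no case analysis. The paper instead proceeds in two informal stages: it first asserts ``by the ring-like structure'' that a sign transition $D_i\le 0$, $D_{i+1}\ge 0$ exists, and then runs a case analysis on how many of the $D_i$ are nonnegative (its Case 1, Case 2.1, Case 2.2), verifying the pair-sum inequality separately in each sign configuration, implicitly re-indexing so that the transition index and the pair-sum index agree. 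What your approach buys is uniformity and completeness: the paper's enumeration of sign patterns is somewhat loose (configurations with ties at zero, or with three nonnegative terms, are not explicitly covered, and Case 2.2 silently uses that the two pair sums $D_i+D_{i+1}$ and $D_{i+2}+D_{i+3}$ add to zero), whereas your single argument covers every configuration at once and makes transparent why one index settles all three inequalities. What the paper's approach buys is that it stays visibly tied to the four-subcube ring picture and requires nothing beyond checking a handful of sign patterns; it is the style of argument a reader can verify case by case without knowing the prefix-sum trick.
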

\begin{proof} By the ring-like structure of $BH_n$, it is obvious that there exists some $i\in\{0,1,2,3\}$ such that $|S_{i}|\geq|T_{i}|$ and $|S_{i+1}|\leq|T_{i+1}|$, implying that $D_i\leq 0$ and $D_{i+1}\geq 0$. Obviously, if $D_{i}\geq0$ for each $0\leq i\leq3$, we are done. Next we distinguish the following cases.

{\bf Case 1.} There exists exactly one integer $i\in\{0,1,2,3\}$ such that $D_{i+1}\geq 0$ or $D_{i+1}\leq 0$. We only consider $D_{i+1}\geq 0$ since the same argument applies to $D_{i+1}\leq 0$. Clearly, $D_i<0, D_{i+2}<0$ and $D_{i+3}<0$. Noting $|S|=\sum_{i=0}^{3}|S_i|$, $|T|=\sum_{i=0}^{3}|T_i|$ and $|S|=|T|$, it follows that $D_{i+1}+D_{i+2}\geq0$.

{\bf Case 2.} There exist exactly two distinct integers $i,j\in\{0,1,2,3\}$ such that $D_{i}\geq 0$ and $D_j\geq 0$, where $i\neq j$. By the ring-like structure of $BH_n$, there are two essentially distinct cases by relative positions of $BH_{n-1}^i$ and $BH_{n-1}^j$. We further distinguish the following two cases.

{\bf Case 2.1.} $j=i+1$. That is, $D_i\geq0$ and $D_{i+1}\geq0$, and $D_{i+2}<0$ and $D_{i+3}<0$. Obviously, $D_i+D_{i+1}\geq0$.

{\bf Case 2.2.} $j=i+2$. That is, $D_i\geq0$ and $D_{i+2}\geq0$, and $D_{i+1}<0$ and $D_{i+3}<0$. Similarly, $D_i+D_{i+1}\geq0$ or $D_{i+2}+D_{i+3}\geq0$ holds. This completes the proof.
\end{proof}

\begin{theorem}{\bf .} There exists an unpaired $(2n-2)$-DPC joining $S$ and $T$.
\end{theorem}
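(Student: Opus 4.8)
The plan is to prove the statement by induction on $n$, the base case $n=2$ being immediate: here $2n-2=2$ and Lemma \ref{2paths} furnishes two vertex-disjoint paths covering $BH_2$, one from $s_1$ to $t_1$ and one from $s_2$ to $t_2$, which is in particular an unpaired $2$-DPC. So assume $n\geq3$ and that every choice of $2n-4$ sources in $V_0$ and $2n-4$ sinks in $V_1$ of $BH_{n-1}$ admits an unpaired $(2n-4)$-DPC, and fix $S\subset V_0$, $T\subset V_1$ with $|S|=|T|=2n-2$ in $BH_n$.

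First I would invoke Lemma \ref{split} to split $BH_n$ along dimension $n-1$ into $BH_{n-1}^0,\dots,BH_{n-1}^3$ with $|S_i|\leq 2n-4$ for every $i$; by Definition \ref{def2} these four copies sit on a $4$-cycle whose connecting (dimension-$(n-1)$) edges join each $V_0$-vertex of $BH_{n-1}^i$ to a $V_1$-vertex of $BH_{n-1}^{i+1}$. I would then use Lemma \ref{D_i} to fix a pivot index $i$ at which sources are in surplus while sinks are in surplus in the next copy; its case split on the signs of the $D_j$ tells me in which rotational direction the surplus sources must be routed toward the surplus sinks.

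The construction I have in mind covers each copy by a disjoint path system and then glues these systems together along selected crossing edges. Inside each $BH_{n-1}^i$ I would run a DPC whose size $k_i$ is matched to an available tool -- the inductive hypothesis when $k_i=2n-4$, Lemma \ref{2paths} when $k_i=2$, and Lemma \ref{lace} (Hamiltonian laceability) when $k_i=1$ -- after augmenting $S_i$ and $T_i$ with ``virtual'' terminals that are endpoints of the crossing edges (virtual sources in $V_0$, virtual sinks in $V_1$). Gluing a segment's $V_1$-endpoint to the $V_0$-endpoint of a segment in the adjacent copy always fuses a sink-end to a source-end, so the bipartite orientation is preserved and each maximal glued path runs from a real source to a real sink. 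Moreover a clean counting invariant does the accounting for free: if each copy uses $k_i$ segments then exactly $\sum_i k_i-|S|$ crossing edges are consumed, so the number of glued paths is $\sum_i k_i-(\sum_i k_i-|S|)=2n-2$ whenever the gluing graph is acyclic.

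The main obstacle is precisely this last construction, because the rigid crossing pattern ($V_0$ of copy $i$ to $V_1$ of copy $i+1$) couples the sizes $k_i$ through a recurrence $k_{i+1}=k_i-|S_i|+|T_{i+1}|$ determined by the data, while only the sizes $1$, $2$ and $2n-4$ are available in $BH_{n-1}$. The heart of the proof is therefore a case analysis on the distribution of $S$ and $T$ among the four copies, organized by the cases of Lemma \ref{D_i}, arranged so that (i) every required $k_i$ is an available DPC size and each copy still satisfies $k_i\geq\max(|S_i|,|T_i|)$; (ii) the crossing edges can be chosen vertex-disjoint, off $S\cup T$, and with endpoints on the correct sides of the bipartition, which is possible since adjacent copies are joined by $4^{n-1}$ crossing edges; and (iii) the gluing is acyclic, so that no closed component arises and all $2n-2$ components are $S$-to-$T$ paths. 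I expect reconciling (i) and (iii) -- fitting the coupled counts to the few available DPC sizes while avoiding cycles -- to be the hardest and most case-heavy part.
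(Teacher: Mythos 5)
Your scaffolding coincides with the paper's: induction on $n$ with Lemma \ref{2paths} as the base case, a split along dimension $n-1$ via Lemma \ref{split}, a pivot index from Lemma \ref{D_i}, per-copy DPCs augmented by virtual terminals at crossing-edge endpoints, and the recurrence $k_{i+1}=k_i-|S_i|+|T_{i+1}|$ is exactly the bookkeeping the paper performs with its sets $A_i$ and $B_{i+1}$. But the proposal stops precisely where the proof begins: the case analysis that you defer as ``the hardest and most case-heavy part'' \emph{is} the paper's entire proof (its Cases 1--3), so what you have is a plan with its central obstacle named but unresolved, not a proof. Worse, the obstacle is unresolvable inside the toolkit you allow yourself. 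You restrict the per-copy sizes to $k_i\in\{1,2,2n-4\}$, but the recurrence forces intermediate values that no choice of pivot or added circulation can avoid: for $n=5$ (sizes $\{1,2,6\}$ available to you), a distribution with $|S_i|=|T_i|$ equal to $2,3,3,0$ over the four copies forces some $k_i=3$, and since consecutive $k_i$ differ by fixed amounts determined by the data, shifting all of them by a constant cannot land every $k_i$ in $\{1,2,6\}$. The paper escapes this by reading the induction hypothesis as supplying a $k$-DPC of $BH_{n-1}$ for \emph{every} $k\le 2n-4$ (in effect a strengthened inductive statement ``unpaired $k$-DPC exists for all $k\le 2n-2$''), which is the missing ingredient your framework needs.

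Two further devices of the paper are absent and are needed exactly where your size limits bind. First, when a copy's required size exceeds $2n-4$ (your upper limit), the paper's Condition (2) of Case 1.1 runs a $(2n-4)$-DPC on a subset $T_2'$ of the terminals and then \emph{splits} the covering paths at the one or two leftover terminals, creating the extra path-ends; nothing in your plan produces sizes above $2n-4$. Second, when a copy has no terminals and the net flow past it is zero (Cases 2 and 3 of the paper, e.g.\ $S_2=T_2=\emptyset$ with $D_1=0$), your recurrence gives $k_i=0$ and the copy is simply uncovered; the paper covers it by building a DPC of the other three copies first, then choosing an edge $xy$ there whose cross-neighbors are internal vertices of paths, deleting $xy$, and splicing in a Hamiltonian path of the empty copy (Lemma \ref{lace}), with an explicit counting argument, $2\lfloor(4^{n-1}-3(2n-4))/2\rfloor>2(2n-2)$ for $n\geq3$, guaranteeing such an edge exists. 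Your mention of Lemma \ref{lace} for ``$k_i=1$'' gestures at this but does not supply the splice or the existence count. Finally, your worry (iii) about acyclicity is a non-issue in the paper's construction: Lemma \ref{D_i}'s pivot is used to cut the ring, so the gluing graph is a path (no crossing edges are used between copy $0$ and copy $1$ in Case 1.1) and no cycle can form.
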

\begin{proof} We prove by induction on $n$. By Lemma \ref{2paths}, the theorem obviously holds for $n=2$. Suppose the statement holds for $n-1$ with $n\geq3$. Next we consider $BH_n$. By Lemmas \ref{split} and \ref{D_i}, we split $BH_n$ into four $BH_{n-1}$s along dimension $n-1$ such that $|S_i|\leq 2n-4$ for each $i\in\{0,1,2,3\}$, and for some $i\in\{0,1,2,3\}$, say $i=0$, we have $|S_{0}|\geq|T_{0}|$, $|S_{1}|\leq|T_{1}|$ and $D_{1}+D_{2}\geq0$. We distinguish the following cases.

\noindent{\bf Case 1.} $S_i\neq\emptyset$ or $T_i\neq\emptyset$ for each $0\leq i\leq3$.

\noindent{\bf Case 1.1} $|T_1|\leq 2n-4$. Choose $|T_1|-|S_1|$ white vertices from $V(BH_{n-1}^1)\setminus S_1$ to generate a set $A_1$. By the induction hypothesis, we can obtain $|T_1|$-DPC of $BH_{n-1}^1$ from $S_1\cup A_1$ to $T_1$. Note that $A_1=\emptyset$ if $|T_1|=|S_1|$. Let $A_1=\{a_1,a_2,\cdots,a_{|T_1|-|S_1|}\}$ and let $B_2=\{b_1,b_2,\cdots,b_{|T_1|-|S_1|}\}$ such that $b_j$ is an $(n-1)$-dimensional neighbor of $a_j$ for each $j\in\{1,2,\cdots,|T_1|-|S_1|\}$. Since $D_{1}+D_{2}\geq0$, we have $|S_2|\leq|T_2\cup B_2|$. We consider the following two conditions in terms of the cardinality of $|T_2\cup B_2|$ (This argument will be used repeatedly in the remaining proof).

\begin{enumerate}
\item If $|T_2\cup B_2|\leq 2n-4$, then we choose $|T_2\cup B_2|-|S_2|$ white vertices from $V(BH_{n-1}^2)\setminus S_2$ to generate a set $A_2$ (Since there are exactly $2^{2n-3}$ white vertices in $BH_{n-1}^1$, combining with $2^{2n-3}\geq 3\ast(2n-4)$ whenever $n\geq3$, we can always choose $A_2$ to make $B_2\cap T_2=\emptyset$). By the induction hypothesis, we can obtain $|T_2\cup B_2|$-DPC of $BH_{n-1}^2$ from $T_2\cup B_2$ to $S_2\cup A_2$. Note also that $A_2=\emptyset$ if $|T_2\cup B_2|=|S_2|$. Let $A_2=\{a_1',a_2',\cdots,a_{|B_2\cup T_2|-|S_2|}'\}$ and let $B_3=\{b_1',b_2',\cdots,b_{|B_2\cup T_2|-|S_2|}'\}$ such that $b_k'$ is an ($n-1$)-dimensional neighbor of $a_k'$ for each $k\in\{1,2,\cdots,|B_2\cup T_2|-|S_2|\}$.

\item If $|T_2\cup B_2|\geq 2n-3$, then we choose $2n-4-|S_2|$ white vertices from $V(BH_{n-1}^2)\setminus S_2$ to generate a set $A_2$ (It is not hard to choose $A_2$ to make $B_2\cap T_2=\emptyset$ when $n=3$; the argument is similar to that in above condition when $n=4$). We then arbitrarily choose $2n-4$ vertices from $T_2\cup B_2$ to form a set $T_2'$. By the induction hypothesis, we can obtain $|T_2'|$-DPC of $BH_{n-1}^2$ from $T_2'$ to $S_2\cup A_2$. Note that $A_2=\emptyset$ if $|S_2|=2n-4$. Since $1\leq|T_2\cup B_2|-|T_2'|\leq2$, there are at most two vertices of $(T_2\cup B_2)\setminus T_2'$ on at most two paths of $|T_2'|$-DPC of $BH_{n-1}^2$. Suppose without loss of generality that $y_1,y_2\in (T_2\cup B_2)\setminus T_2'$. Additionally, suppose that $y_1$ and $y_2$ are on the same path $P$ of $|T_2'|$-DPC (the proof of $y_1$ and $y_2$ on different paths of $|T_2'|$-DPC is similar to those on the same path). Let $x$ and $y$ be the endpoints of $P$, where $x\in V_0$ and $y\in V_1$. We may assume that $x, y_1, y_2$ and $y$ lie on $P$ sequentially. So there exists a neighbor $x_1$ (resp. $x_2$) of $y_1$ (resp. $y_2$) from $y$ to $x$. Thus, the path $P$ can be separated into three vertex-disjoint sections $P_1,P_2$ and $P_3$, where $P_1$ is from $x$ to $y_1$, $P_2$ is from $x_1$ to $y_2$ and $P_3$ is from $x_2$ to $y$. Therefore, $P_1,P_2$ and $P_3$ together with paths in $|T_2'|$-DPC except $P$ form a $|T_2\cup B_2|$-DPC of $BH_{n-1}^2$ from $T_2\cup B_2$ to $S_2\cup A_2\cup\{x_1,x_2\}$. We may assume that $A_2=\{a_1',a_2',\cdots,a_{2n-4-|S_2|}'\}$. Let $B_3=\{b_1',b_2',\cdots,b_{2n-4-|S_2|}'\}\cup\{y_1',y_2'\}$ such that $b_k'$ is an ($n-1$)-dimensional neighbor of $a_k'$ for each $k\in\{1,2,\cdots,2n-4-|S_2|\}$ and $y_1'$ and $y_2'$ are ($n-1$)-dimensional neighbors of $x_1'$ and $x_2'$, respectively.

\end{enumerate}

It is not hard to see that $|S_3|\leq|T_3\cup B_3|$.

If $|T_3\cup B_3|\leq 2n-4$, analogous to Condition (1), then we choose $|T_3\cup B_3|-|S_3|$ white vertices from $V(BH_{n-1}^3)\setminus S_3$ to generate a set $A_3$. By the induction hypothesis, we can obtain $|T_3\cup B_3|$-DPC of $BH_{n-1}^3$ from $T_3\cup B_3$ to $S_3\cup A_3$. Note that $A_3=\emptyset$ if $|T_3\cup B_3|=|S_3|$. Let $A_3=\{a_1'',a_2'',\cdots,a_{|B_3\cup T_3|-|S_3|}''\}$ and let $B_0=\{b_1'',b_2'',\cdots,b_{|B_3\cup T_3|-|S_3|}''\}$ such that $b_l''$ is an ($n-1$)-dimensional neighbor of $a_l''$ for each $l\in\{1,2,\cdots,|B_3\cup T_3|-|S_3|\}$. Clearly, $|T_0\cup B_0|=|S_0|$. Additionally, $|S_0|\leq 2n-4$. By the induction hypothesis, we can obtain $|S_0|$-DPC of $BH_{n-1}^0$ from $T_3\cup B_3$ to $S_0$. Thus, we can obtain $(2n-2)$-DPC of $BH_{n}$ from $S$ to $T$ (see Fig. \ref{g2}).

\begin{figure}
\centering
\includegraphics[width=60mm]{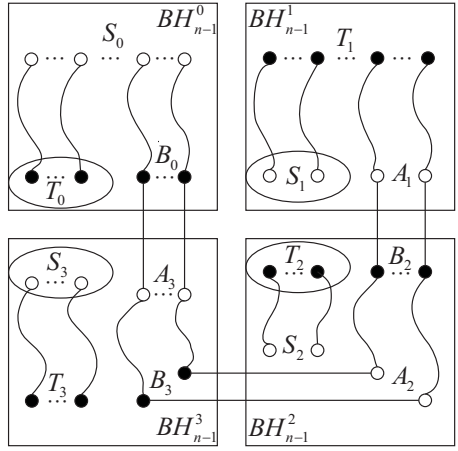}
\caption{Illustration of Case 1.1.} \label{g2}
\end{figure}

If $|T_3\cup B_3|\geq 2n-3$, analogous to Condition (2), then we can obtain $|T_3\cup B_3|$-DPC of $BH_{n-1}^3$ from $T_3\cup B_3$ to $S_3\cup A_3$, where $A_3$ contains white endpoints of $|T_3\cup B_3|$-DPC of $BH_{n-1}^3$. Let $B_0$ be a mirroring set of $A_3$ such that each vertex in $A_3$ has exactly one $(n-1)$-dimensional neighbor in $B_0$, and vice versa. Clearly, $|T_0\cup B_0|=|S_0|$ and $|S_0|\leq 2n-4$. By the induction hypothesis, we can obtain $|S_0|$-DPC of $BH_{n-1}^0$ from $T_3\cup B_3$ to $S_0$. Thus, we can obtain $(2n-2)$-DPC of $BH_{n}$ from $S$ to $T$.

\noindent{\bf Case 1.2.} $|T_1|\geq2n-3$. Choose $2n-4-|S_1|$ white vertices from $V(BH_{n-1}^1)\setminus S_1$ to generate a set $A_1$. By regarding $|T_1|$ as $T_2\cup B_2$, and $S_1\cup A_1$ as $S_2\cup A_2$, in Condition (2), the proof is quite analogous to that of Case 1.1.

\noindent{\bf Case 2.} $S_j=\emptyset$ and $T_j=\emptyset$ for exactly one $j\in\{0,1,2,3\}$. By symmetry of $BH_n$, it suffices to consider that $j=1$ or 2. We distinguish the following two cases.

\noindent{\bf Case 2.1.} $j=2$. If $D_1>0$, then the proof is analogous to that of Case 1. So we assume that $D_1=0$, that is, $|T_{1}|=|S_{1}|$. By Lemma \ref{split}, we have $|S_1|\leq 2n-4$. So $|T_{1}|\leq 2n-4$. By the induction hypothesis, there exists $|T_{1}|$-DPC of $BH_{n-1}^1$ from $T_1$ to $S_1$. In addition, recall that $|S_2|=|T_2|$, combining with $|D_0|\leq0$, then we have $|T_3|\geq |S_3|$. By adopting the same argument in Case 1.1, we can obtain $|T_3|$-DPC of $BH_{n-1}^3$ from $T_3$ to $S_3\cup A_3$, where $A_3$ contains white endpoints of $|T_3|$-DPC. Note that $A_3=\emptyset$ if $|T_3\cup B_3|=|S_3|$. Let $B_0\in V(BH_{n-1}^0)$ be a mirroring set of $A_3$ such that each vertex of $B_0$ is an ($n-1$)-dimensional neighbor of a vertex in $A_3$. We can obtain $|S_0|$-DPC of $BH_{n-1}^0$ from $T_0\cup B_0$ to $S_0$.

What we have already shown is that there exists a $(2n-2)$-DPC of $BH_{n}-V(BH_{n-1}^2)$. In what follows, we shall make some changes to $(2n-2)$-DPC of $BH_{n}-V(BH_{n-1}^2)$, yielding a $(2n-2)$-DPC of $BH_n$.

Suppose that $xy\in E(BH_{n-1}^0)$ is an edge on one of a path, say $P_0$, of $(2n-2)$-DPC of $BH_{n}-V(BH_{n-1}^2)$ such that $s_0,y,x$ and $t_0$ lie sequentially on $P_0$, where $s_0$ and $t_0$ are the endpoints of $P_0$. Note that $s_0\in S_0$ and $t_0$ may or may not belong to $T_0$. Let $y_1$ and $x_3$ be $(n-1)$-dimensional neighbors of $x$ and $y$, respectively. If $y_1\not\in T_1$ and $x_3\not\in S_3\cup A_3$, then $y_1$ (resp. $x_3$) is an internal vertex of a path $P_1$ (resp. $P_3$) of $|T_{1}|$-DPC (resp. $|T_3|$-DPC) of $BH_{n-1}^1$ (resp. $BH_{n-1}^3$). Let $x_1$ (resp. $y_3$) be a neighbour of $y_1$ (resp. $x_3$) on $P_1$ (resp. $P_3$). Suppose without loss of generality that $s_1,y_1,x_1$ and $t_1$ (resp. $s_3,y_3,x_3$ and $t_3$) lie sequentially on $P_1$ (resp. $P_3$), where $s_1$ and $t_1$ (resp. $s_3$ and $t_3$) are the endpoints of $P_1$ (resp. $P_3$). Let $y_2$ and $x_2$ be $(n-1)$-dimensional neighbors of $x_1$ and $y_3$, respectively. By Lemma \ref{lace}, there exists a Hamiltonian path $P_2$ of $BH_{n-1}^2$ from $y_2$ to $x_2$. Deleting $xy$, $x_1y_1$ and $x_3y_3$ from $P_0$, $P_1$ and $P_3$, respectively, and joining $xy_1$, $yx_3$, $x_1y_2$ and $x_2y_3$ will lead to three new paths $P_0'$, $P_1'$ and $P_3'$, where $P_0'$ is from $s_0$ to $t_3$ via $yx_3$, $P_1'$ is from $s_1$ to $t_0$ via $y_1x$, and $P_3'$ is from $s_3$ to $t_1$ via $y_3x_2$, $P_2$ and $y_2x_1$. By deleting edges of $P_0$, $P_1$ and $P_3$ from $(2n-2)$-DPC of $BH_{n}-V(BH_{n-1}^2)$, and adding edges of $P_0'$, $P_1'$ and $P_3'$, a $(2n-2)$-DPC of $BH_{n}$ follows (see Fig. \ref{g3-1}).

\begin{figure}
\centering
\includegraphics[width=60mm]{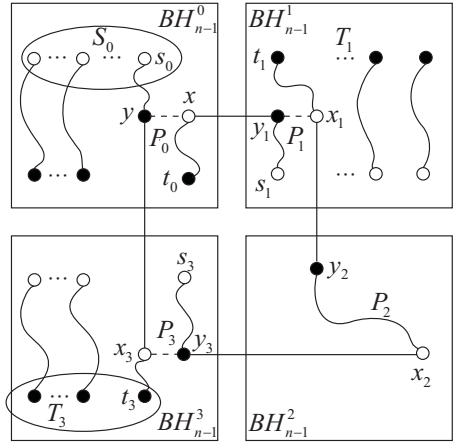}
\caption{Illustration of Case 2.1.} \label{g3-1}
\end{figure}

Finally, we claim that there exists an edge $xy\in E(BH_{n-1}^1)$ such that $y_1$ (resp. $x_3$) is not an endpoint of a path of $|T_{1}|$-DPC (resp. $|T_3|$-DPC) of $BH_{n-1}^1$ (resp. $BH_{n-1}^3$). Since neither $x$ nor $y$ is an endpoint of a path $P_0$ in $|S_0|$-DPC of $BH_{n-1}^0$, combing with the direction of $xy$ on $P_0$, there are at least $\lfloor\frac{4^{n-1}-2\ast(2n-4)}{2}\rfloor$ choices of $xy$ in $|S_0|$-DPC of $BH_{n-1}^0$. On the contrary, if both $y_1$ (resp. $x_3$) and its backup vertex are endpoints of two paths in $|T_{1}|$-DPC (resp. $|T_3|$-DPC) of $BH_{n-1}^1$ (resp. $BH_{n-1}^3$), then it will eliminate one choice of $xy$. Observe that $|T_1|+|T_3|\leq 2\ast(2n-2)$ and $2\lfloor\frac{4^{n-1}-2\ast(2n-4)}{2}\rfloor>2\ast(2n-2)$ whenever $n
\geq3$. Thus, the claim holds.

\noindent{\bf Case 2.2.} $j=1$. Analogous to the proof of Case 1.1, we can obtain a $(2n-2)$-DPC of $BH_{n}-V(BH_{n-1}^1)$.

Similarly, we can choose an appropriate edge $xy\in E(BH_{n-1}^3)$ on one of a path, say $P_3$, of $(2n-2)$-DPC of $BH_{n}-V(BH_{n-1}^1)$ such that $y_0$ (resp. $x_2$) is an internal vertex of a path $P_0$ (resp. $P_2$) of $|S_{0}|$-DPC (resp. $|T_2|$-DPC) in $BH_{n-1}^0$ (resp. $BH_{n-1}^2$), where $y_0$ and $x_2$ are $(n-1)$-dimensional neighbors of $x$ and $y$, respectively.

We claim that there exists an edge $xy\in E(BH_{n-1}^3)$ such that $y_0$ (resp. $x_2$) is not an endpoint of a path of $|S_{0}|$-DPC (resp. $|T_2|$-DPC) of $BH_{n-1}^0$ (resp. $BH_{n-1}^2$). Since neither $x$ nor $y$ is an endpoint of a path $P_3$ in $|T_3|$-DPC of $BH_{n-1}^3$, combing with the direction of $xy$ on $P_3$, there are at least $\lfloor\frac{4^{n-1}-2\ast(2n-2)}{2}\rfloor$ choices of $xy$ in $|T_3|$-DPC of $BH_{n-1}^3$. On the contrary, if both $y_0$ (resp. $x_2$) and its backup vertex are endpoints of two paths in $|S_{0}|$-DPC (resp. $|T_2|$-DPC) of $BH_{n-1}^0$ (resp. $BH_{n-1}^2$), then it will eliminate one choice of $xy$. Observe that $|S_0|+|T_2|\leq (2n-4)+(2n-2)$ and $2\lfloor\frac{4^{n-1}-2\ast(2n-2)}{2}\rfloor>(2n-4)+(2n-2)$ whenever $n\geq3$, thus, the claim holds.


We may assume that $s_3, y,x$ and $t_3$ lie sequentially on $P_3$, where $s_3$ and $t_3$ are the endpoints of $P_3$. Let $x_0$ (resp. $y_2$) be a neighbour of $y_0$ (resp. $x_2$) on $P_0$ (resp. $P_2$). Suppose without loss of generality that $s_0,y_0,x_0$ and $t_0$ (resp. $s_2,y_2,x_2$ and $t_2$) lie sequentially on $P_0$ (resp. $P_2$), where $s_0$ and $t_0$ (resp. $s_2$ and $t_0$) are the endpoints of $P_0$ (resp. $P_2$). Let $y_1$ and $x_1$ be $(n-1)$-dimensional neighbors of $x_0$ and $x_2$, respectively. By Lemma \ref{lace}, there exists a Hamiltonian path $P_1$ of $BH_{n-1}^1$ from $y_1$ to $x_1$. Deleting $xy$, $x_0y_0$ and $x_2y_2$ from $P_3$, $P_0$ and $P_2$ and joining $xy_0$, $yx_2$, $x_0y_1$ and $x_1y_2$ will lead to three new paths $P_0'$, $P_2'$ and $P_3'$, where $P_0'$ is from $s_0$ to $t_3$ via $y_0x$, $P_2'$ is from $s_2$ to $t_0$ via $y_2x_1$, $P_1$ and $y_1x_0$, and $P_3'$ is from $s_3$ to $t_2$ via $yx_2$. By deleting edges of $P_0$, $P_2$ and $P_3$ from $(2n-2)$-DPC of $BH_{n}-V(BH_{n-1}^1)$, and adding edges of $P_0'$, $P_2'$ and $P_3'$, a $(2n-2)$-DPC of $BH_{n}$ follows (see Fig. \ref{g3-2}).

\begin{figure}[h]
\centering
\includegraphics[width=60mm]{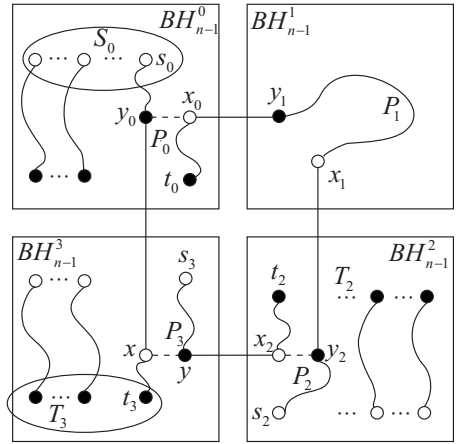}
\caption{Illustration of Case 2.2.} \label{g3-2}
\end{figure}

\noindent{\bf Case 3.} $S_i=\emptyset$ and $T_i=\emptyset$ and $S_j=\emptyset$ and $T_j=\emptyset$ for $i,j\in\{0,1,2,3\}$. By the relative positions of $BH_{n-1}^i$ and $BH_{n-1}^j$, there are two cases to consider.

\noindent{\bf Case 3.1.} $i=j+1$ or $j=i+1$. Suppose without loss of generality that $j=i+1$. There are two essentially distinct cases to consider.

\noindent{\bf Case 3.1.1.} $i=1$ and $j=2$. That is, $S_1=T_1=\emptyset$ and $S_2=T_2=\emptyset$. Note that $|S_0|\leq 2n-4$ and $|S_3|\leq 2n-4$. Additionally, $|T_0|\leq |S_0|$ and $|T_3|\geq |S_3|$. By the proof of Case 1.1, we can obtain $(2n-2)$-DPC of $BH_n-V(BH_{n-1}^1)\cup V(BH_{n-1}^2)$. Similarly, we shall make some changes to $(2n-2)$-DPC of $BH_{n}-V(BH_{n-1}^1)\cup V(BH_{n-1}^2)$, yielding a $(2n-2)$-DPC of $BH_n$.

\begin{figure}[h]
\centering
\includegraphics[width=60mm]{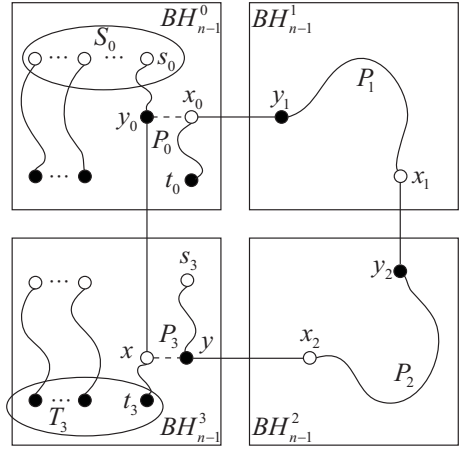}
\caption{Illustration of Case 3.1.1.} \label{g4}
\end{figure}

Suppose that $xy\in E(BH_{n-1}^3)$ is an edge on one of a path, say $P_3$, of $(2n-2)$-DPC of $BH_{n}-V(BH_{n-1}^1)\cup V(BH_{n-1}^2)$ such that $s_3,y,x$ and $t_3$ lie sequentially on $P_3$, where $s_3$ and $t_3$ are the endpoints of $P_3$. Let $y_0$ and $x_2$ be $(n-1)$-dimensional neighbors of $x$ and $y$, respectively. By the proof of Case 2.1, we may assume that $y_0$ is an internal vertex of a path $P_0$ of $|S_{0}|$-DPC in $BH_{n-1}^0$. Let $x_0$ be a neighbour of $y_0$ on $P_0$. Suppose without loss of generality that $s_0,y_0,x_0$ and $t_0$ lie sequentially on $P_0$, where $s_0$ and $t_0$ are the endpoints of $P_0$. Let $y_1$ be an $(n-1)$-dimensional neighbor of $x_0$ and let $x_1y_2$ be an edge from $BH_{n-1}^1$ to $BH_{n-1}^2$. By Lemma \ref{lace}, there exists a Hamiltonian path $P_1$ (resp. $P_2$) of $BH_{n-1}^1$ (resp. $BH_{n-1}^2$) from $y_1$ to $x_1$ (resp. $y_2$ to $x_2$). Deleting $xy$ and $x_0y_0$ from $P_0$ and $P_3$ and joining $xy_0$, $yx_2$, $x_0y_1$ and $x_1y_2$ will lead to two new paths $P_0'$ and $P_3'$, where $P_0'$ is from $s_0$ to $t_3$ via $y_0x$, and $P_3'$ is from $s_3$ to $t_0$ via $yx_2$, $P_2$, $y_2x_1$, $P_1$ and $y_1x_0$. By deleting edges of $P_0$ and $P_3$ from $(2n-2)$-DPC of $BH_{n}-V(BH_{n-1}^1)\cup V(BH_{n-1}^2)$, and adding edges of $P_0'$ and $P_3'$, a $(2n-2)$-DPC of $BH_{n}$ follows (see Fig. \ref{g4}).

\noindent{\bf Case 3.1.2.} $i=2$ and $j=3$. That is, $S_2=T_2=\emptyset$ and $S_3=T_3=\emptyset$. Note that $|S_0|\leq 2n-4$ and $|S_1|\leq 2n-4$. Additionally, $|T_0|\leq |S_0|$ and $|T_1|\geq |S_1|$.

If $|T_1|=|S_1|$, then $|T_1|\leq 2n-4$. The proof is analogous to that of Case 3.1.1.

If $|T_1|>|S_1|$ and $|T_1|\leq 2n-4$, then the proof is analogous to that of Condition (1) in Case 1.1.

If $|T_1|>2n-4$, then $|T_1|>|S_1|$. Then the proof is analogous to that of Condition (2) in Case 1.1.

\noindent{\bf Case 3.2.} $i=j+2$ or $j=i+2$. Suppose without loss of generality that $i=1$ and $j=3$. We further consider the following two cases

\noindent{\bf Case 3.2.1.} $|T_2|=|S_2|$. Then $|T_2|\leq 2n-4$. Additionally, $|T_0|=|S_0|$. By the induction hypothesis, we can obtain $|T_2|$-DPC and $|S_0|$-DPC of $BH_{n-1}^2$ and $BH_{n-1}^0$, respectively. Obviously, we can choose an edge $x_0y_0\in E(BH_{n-1}^0)$ on one of a path, say $P_0$, of $|S_0|$-DPC of $BH_{n-1}^0$ such that $s_0,y_0,x_0$ and $t_0$ lie sequentially on $P_0$, where $s_0$ and $t_0$ are the endpoints of $P_0$. Similarly, we can choose an edge $x_2y_2\in E(BH_{n-1}^2)$ on one of a path, say $P_2$, of $|T_2|$-DPC of $BH_{n-1}^2$ such that $s_2,y_2,x_2$ and $t_2$ lie sequentially on $P_2$, where $s_2$ and $t_2$ are the endpoints of $P_2$. Let $x_1,y_1,x_3$ and $y_3$ be $(n-1)$-dimensional neighbors of $y_2,x_0,y_0$ and $x_2$, respectively. By Lemma \ref{lace}, there exists a Hamiltonian path $P_1$ (resp. $P_3$) of $BH_{n-1}^1$ (resp. $BH_{n-1}^3$) from $y_1$ to $x_1$ (resp. $y_3$ to $x_3$). Deleting $x_0y_0$ and $x_2y_2$ from $P_0$ and $P_2$ and joining $x_0y_1$, $x_1y_2$, $x_2y_3$ and $x_3y_0$ will lead to two new paths $P_0'$ and $P_2'$, where $P_0'$ is from $s_0$ to $t_2$ via $y_0x_3$, $P_3$, $y_3x_2$, and $P_2'$ is from $s_2$ to $t_0$ via $y_2x_1$, $P_1$ and $y_1x_0$. By deleting edges of $P_0$ and $P_2$ from $|S_0|$-DPC of $BH_{n-1}^0$ and $|T_2|$-DPC of $BH_{n-1}^2$, and adding edges of $P_0'$ and $P_2'$, a $(2n-2)$-DPC of $BH_{n}$ follows (see Fig. \ref{g5}).

\begin{figure}[h]
\centering
\includegraphics[width=60mm]{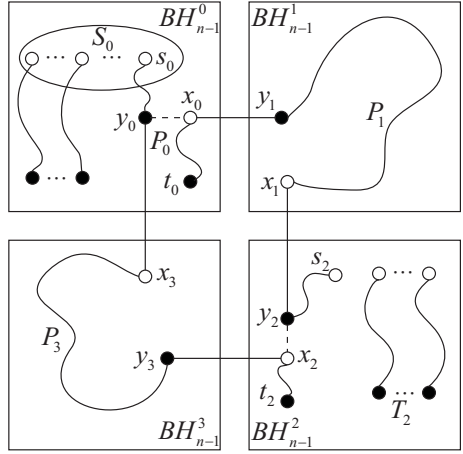}
\caption{Illustration of Case 3.2.1.} \label{g5}
\end{figure}

\noindent{\bf Case 3.2.2.} $|T_2|>|S_2|$. Analogous to the proof of Case 1.1, we can obtain $(2n-2)$-DPC of $BH_{n}-V(BH_{n-1}^1)$. The remaining proof is just a rewrite of that of Case 2.2, so we omit it.

\end{proof}

\section{Conclusions}
In this paper, unpaired many-to-many DPC of the balanced hypercube is obtained. We use induction to prove that the balanced hypercube $BH_n$, $n\geq2$, has unpaired ($2n-2$)-DPC. Let $u,u'\in V_0$ be two vertices having the same neighborhood and let $W=\{v_1,v_2,\cdots,v_{2n-1}\}$ be a set containing $2n-1$ distinct vertices. In addition, $u,u'\not\in S$ and $W\subseteq T$. If each vertex of $W$ is a neighbor of $u$ and $u'$, then one of $u$ and $u'$ can not be covered by any $(2n-1)$-DPC. That is, the number of disjoint paths in any unpaired many-to-many DPC can not exceed $2n-2$, indicating the upper bound $2n-2$ of the number of disjoint paths in ($2n-2$)-DPC is optimal.

It is meaningful to explore whether the upper bound $2n-2$ holds for paired many-to-many DPC. From the perspective of distributed computing, one may study algorithms to obtain many-to-many DPC in the balanced hypercube. Moreover, unpaired many-to-many DPC of the balanced hypercube with faulty elements is of interest and should be further investigated.

\vskip 0.3 in

%

\end{document}